\documentclass[11pt]{article}
\usepackage[all]{xy}
\usepackage{amsmath}
\usepackage{amsfonts}
\usepackage{amssymb}
\usepackage{amscd}
\usepackage{amsthm}
\usepackage{latexsym}
\usepackage{amsbsy}
\usepackage{mathtools}
\DeclarePairedDelimiter\abs{\lvert}{\rvert}
\def\0D{\Delta^{(0)}}
\def\1D{\Delta^{(1)}}

\newtheorem{theorem}{Theorem}[section]
\newtheorem{remark}[theorem]{Remark}
\newtheorem{proposition}[theorem]{Proposition}
\newtheorem{lemma}[theorem]{Lemma}

\newtheorem{example}[theorem]{Example}
\newtheorem{definition}[theorem]{Definition}

\def\build#1_#2^#3{\mathrel{
\mathop{\kern 0pt#1}\limits_{#2}^{#3}}}

\parindent 0in

\numberwithin{equation}{section}

\def\a{\alpha}
\def\b{\beta}

\def\ot{\otimes}
\def\part{\partial}

\def\text{\hbox}

\def\ot{\otimes}


\def\Id{\mathop{\rm Id}\nolimits}

\def\build#1_#2^#3{\mathrel{
\mathop{\kern 0pt#1}\limits_{#2}^{#3}}}

\numberwithin{equation}{section}
\parindent 0in
\newcommand{\comment}[1]{\relax}

\textheight20.5cm
\oddsidemargin.5cm
\evensidemargin1cm
\textwidth15cm


\begin{document}
\title{Lagrange's Theorem For Hom-Groups}
\author {Mohammad Hassanzadeh}
\date{University of Windsor\\
Windsor, Ontario, Canada\\
mhassan@uwindsor.ca
}
\maketitle
\begin{abstract}

Hom-groups are nonassociative generalizations of groups where the  unitality and associativity are twisted by a map.
We show  that a Hom-group $(G, \a)$  is a pointed idempotent quasigroup (pique). We use Cayley table of quasigroups to introduce some examples of Hom-groups.
Introducing the notions of Hom-subgroups and cosets we prove  Lagrange's theorem for finite Hom-groups. This states  that the order of any Hom-subgroup $H$ of a finite Hom-group $G$ divides the order of $G$. We linearize Hom-groups to obtain a class of nonassociative Hopf algebras called  Hom-Hopf algebras.
As an application of our results, we show that the dimension of a Hom-sub-Hopf algebra of the finite dimensional Hom-group Hopf algebra $\mathbb{K}G$ divides the  order of $G$.
The new tools introduced in this paper could  potentially  have  applications in theories of quasigroups, nonassociative Hopf algebras, Hom-type objects, combinatorics, and cryptography.
\end{abstract}

\section{ Introduction}

Nonassociative objects such as quasigroups, loops,  non-associative algebras, and Hopf algebras have many applications in several contexts. Among all of these, Hom-type objects have been under intensive research in the last decade.  Hom-Lie algebras have appeared in quantum deformations of Witt and Virasoro algebras \cite{as}, \cite{ckl}, \cite{cz}. Hom-Lie algebras, \cite{hls},  are generalizations of Lie algebras where Jacobi identity is twisted by a linear map. The Witt algebra is  the complexification of the Lie algebra of polynomial vector fields on a circle with a basis $ L_n=  -z^{n+1}\frac{\partial}{\partial z} $  and the Lie bracket which is given by $ [ L_m,  L_n]= (m-n)L_{m+n}$. This Lie algebra  can also be viewed as the Lie algebra of derivations $D$ of the ring $\mathbb{C}[z, z^{-1}]$, where $D(ab)= D(a)b + aD(b)$. The Lie bracket of two derivations $D$ and $D'$ is given by $ [D, D']= D\circ D'- D' \circ D$. This algebra has a central extension, called the Virasoro algebra, which appears in two-dimensional conformal field theory and string theory. One  can  define the quantum deformation of  $D$  given by $ D_q (f)(z) = \frac{f(qz)- f(z)}{qz-z}$. These linear operators are different from the usual derivations and they satisfy $\sigma$-derivation property $ D_q(fg)= gD_q(f) + \sigma(f) D_q(g)$ where $\sigma(f)(z)= f(qz)$. An example of a $\sigma$-derivation is the Jackson derivative on polynomials in one variable.
The set of $\sigma$-derivations with the classical bracket is a new type of algebra so called $\sigma$-deformations of the Witt algebra. This algebra does not satisfy the Jacobi identity. Instead, it satisfies  Hom-Jacobi identity and it is called Hom-Lie algebra. The corresponding associative algebras,
called Hom-associative algebras, were introduced in \cite{ms1}. Any Hom-associative algebra with the bracket $[a, b]= ab-ba$ is a Hom-Lie algebra. Later, other nonassociative objects such as Hom-coalgebras \cite{ms2}, Hom-bialgebras \cite{ms2}, \cite{ms3}, \cite{ya2}, \cite{gmmp}, and Hom-Hopf  algebras \cite{ms2}, \cite{ya3}, \cite{ya4},     were introduced and studied.
\\

 Hom-groups  are  nonassociative objects which  were recently appeared   in the study of group-like elements  of Hom-Hopf algebras \cite{lmt}.
Studying Hom-groups  gives us more information  about Hom-Hopf algebras. One knows that groups and Lie algebras have important rules to develop  many concepts related to Hopf algebras. During the last years, Hom-Lie algebras had played important rules to understand the structures of Hom-Hopf algebras. However, a  lack of the notion of Hom-groups can affect to miss some  concepts which potentially can be extended from Hom-groups to Hom-group Hopf algebras and therefore possibly to all Hom-Hopf algebras.
 A Hom-group $(G, \a)$ is a set $G$ with a bijective map $\a: G\longrightarrow G$ which is endowed with a multiplication that  satisfies the Hom-associativity property $\a(a) (bc)= (ab) \a(c)$. Furthermore $G$ has the Hom-unit element $1$ which satisfies $a 1 =1 a=\a(a)$. Every element  $g\in G$ has an inverse $g^{-1}$ satisfying $gg^{-1}= g^{-1}g=1$. If  $\a=\Id$, then $G$ is a group.
Although the twisting map $\a$ of a Hom-group $(G, \a)$ does not need to be inventible in the original works \cite{lmt}, \cite{hassan1}, however, more interesting results, including the main results of this paper,  are obtained if $\a$ is invertible. As a result through the paper, we assume that the twisting map $\a$ is bijective.
Since any Hom-group gives rise to a Hom-Hopf algebra, called Hom-group Hopf algebra \cite{hassan2},  it is interesting to know what properties will be enforced by the invertibility of $\a$  on Hom-Hopf algebras. It is shown in \cite{cg} that the category of modules over a Hom-Hopf algebra with invertible twisting map is monoidal. For this reason,  they called  them monoidal Hom-Hopf algebras. Also many interesting properties of Hom-Hopf algebras, such as integrals, modules, comodules, and Hopf representations  are obtained when $\a$ is bijective, see \cite{cwz}, \cite{hassan2}, \cite{pss} \cite{zz}.  The author in \cite{hassan1} introduced some basics of Hom-groups, their representations, and Hom-group (co)homology. They showed that the Hom-group (co)homology is related to the Hochschild (co)homology, \cite{hss}, of Hom-group algebras. \\

Lagrange-type's theorem for nonassociative objects is a nontrivial problem. For instance, whether Lagrange's theorem holds for Moufang loops was an open problem in the theory of Moufang loops for more than four decades \cite{ckrv}.   In fact, not
every loop satisfies the Lagrange property   and the problem was finally answered in  \cite{gz}. The authors in \cite{bs} proved a version of Lagrange's theorem for Bruck loops. The strong Lagrange property  was shown for  left Bol loops of odd order in \cite{fkp}. However,
it is still an open problem whether  Bol loops satisfy the Lagrange property. The authors in \cite{sw} proved  Lagrange's theorem for gyrogroups which are a class of Bol loops. In this paper, we prove  Lagrange's theorem for Hom-groups which are  an interesting class of quasigroups. More precisely,
in Section $2$, we introduce some fundamental concepts of Hom-groups such as Hom-subgroups, cosets, center,  and the centralizer of an element.
In Theorem \ref{quasigroup}, we show that any  Hom-group   is a quasigroup.  This means division is always possible to solve the equations $ax=b$ and $ya=b$.
The unit element $1$ is an idempotent element and in fact, this class of interesting quasigroups is known as pointed idempotent quasigroups (piques) \cite{bh}. Indeed, a Hom-group is a special case of piques which satisfies  certain twisted associativity condition given by the idempotent element $1$.
If a Hom-group $G$ is a loop  then $ 1 x= \a(x) =x$ which means $\a=\Id$, and therefore $G$ should be a group.
 We use properties of Cayley table of Hom-groups to present some examples. The Cayley tables of quasigroups have been used in combinatorics and cryptography, see \cite{cps},  \cite{dk}, \cite{sc},  \cite{bbw}.
In Section $3$, we use cosets to partition a Hom-group $G$. Then we prove  Lagrange's theorem for Hom-groups which states  that for any finite Hom-group,  the order of any Hom-subgroup divides the order of the  Hom-group.
In Section $4$, we apply Lagrange's theorem to show that for a Hom-sub-Hopf algebra $A$ of $\mathbb{K}G$,  $dim(A)$ divides $\abs{G}$. We finish the paper by rasing some conjectures about Hom-groups.

\bigskip


\tableofcontents
\section{ Hom-groups }
In this section, we introduce basic notions for a  Hom-group $(G, \a)$ such as Hom-subgroups, cosets, and the center.
Our definition of a Hom-group  is a special case of the one discussed in  \cite{lmt} and \cite{hassan1}.
Through the  paper,   we assume the map $\a$ is invertible. Therefore our axioms will be different from the ones in the original definition. We show that some of the  axioms can be obtained by Hom-associativity when $\a$ is invertible.

\begin{definition}

  A Hom group consists of a set $G$ together with
a distinguished member $1\in G$,
a bijective set map:  $\alpha: G\longrightarrow G$,
a binary operation $\mu: G\times G\longrightarrow G$,
where these pieces of structure are subject to the following axioms:\\

i) The product map $\mu: G\times G\longrightarrow G$ is satisfying the Hom-associativity property
   $$\mu(\alpha(g), \mu(h, k))= \mu(\mu(g,h), \alpha(k)).$$
   For simplicity when there is no confusion we omit the multiplication sign $\mu$.

   ii)  The map $\alpha $ is multiplicative, i.e, $\alpha(gk)=\alpha(g)\alpha(k)$.

   iii) The   element $1$ is called unit and it satisfies  the Hom-unitality conditions
   $$g1=1g=\alpha(g), \quad\quad ~~~~~ \a(1)=1.$$

   v) For every element $g\in G$, there exists an element $g^{-1}\in G$ which
   $$g g^{-1}=g^{-1}g=1.$$

\end{definition}

Based on the definition of a  Hom-group in \cite{lmt},  \cite{hassan1} and \cite{hassan2},
for  any element $g\in G$
   there exists a natural number  $n$ satisfying the Hom-invertibility condition
   $\alpha^n(g g^{-1})=\alpha^n(g^{-1}g)=1,$
   where the smallest such
    $n$ is called the invertibility index of $g$. Clearly if $\a$ is invertible this condition will be simplified to our condition (v).
One also notes that the condition (iii) implies $\mu(1,1)=1$. This shows that $1$ is an idempotent element. The following lemma is crucial for our studies of Hom-groups.

   \begin{lemma}
     Let $(G, \a)$ be a Hom-group. Then \\

     i) The inverse of any element is unique.

     ii) $(ab)^{-1}= b^{-1} a^{-1}$ for all $a,b\in G$.
   \end{lemma}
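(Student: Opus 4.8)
The plan is to prove (i) first and then deduce (ii) from it by a uniqueness argument. For (i), suppose $h$ and $k$ are both inverses of $g$, so $gh=hg=1$ and $gk=kg=1$. I would feed the triple $(h,g,k)$ into the Hom-associativity axiom to get $\a(h)(gk)=(hg)\a(k)$. Since $gk=1$ on the left and $hg=1$ on the right, and since the unit axiom gives $x1=1x=\a(x)$, the left side collapses to $\a(h)\cdot 1=\a^2(h)$ and the right side to $1\cdot\a(k)=\a^2(k)$. Hence $\a^2(h)=\a^2(k)$, and because $\a$ is a bijection so is $\a^2$, which forces $h=k$.

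For (ii) I would first record two auxiliary facts about how $\a$ interacts with inversion. Applying multiplicativity of $\a$ together with $\a(1)=1$ to the relation $gg^{-1}=1$ gives $\a(g)\a(g^{-1})=\a(1)=1$, and symmetrically on the other side, so $\a(g)^{-1}=\a(g^{-1})$; running the same reasoning for the multiplicative bijection $\a^{-1}$ shows that $\a^{-1}$ also commutes with inversion. These two facts let me slide $\a^{\pm1}$ past inverses and through products freely, which is what makes the subsequent bookkeeping work.

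By part (i) it then suffices to check that $b^{-1}a^{-1}$ is a two-sided inverse of $ab$. To compute $(ab)(b^{-1}a^{-1})$ I would rewrite the right factor as $\a(\a^{-1}(b^{-1}a^{-1}))$ so that Hom-associativity in the form $(ab)\a(z)=\a(a)(bz)$ applies. Pushing $\a^{-1}$ through the product and through the inverses (writing $a'=\a^{-1}(a)$, $b'=\a^{-1}(b)$), then using $b'(b')^{-1}=1$ together with $1x=\a(x)$, the inner product reduces to $\a(a^{-1})$, whence $(ab)(b^{-1}a^{-1})=\a(a)\a(a^{-1})=\a(aa^{-1})=\a(1)=1$. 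A mirror-image computation, this time rewriting the left factor through $\a$ and invoking Hom-associativity in the form $(xy)\a(z)=\a(x)(yz)$, gives $(b^{-1}a^{-1})(ab)=1$. Uniqueness from (i) then yields $(ab)^{-1}=b^{-1}a^{-1}$.

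The main obstacle is purely organizational rather than conceptual: every product silently introduces a factor of $\a$ through both the unit law and the twisted associativity, so the real work is to track the powers of $\a$ and to decide where to insert the resolution $c=\a(\a^{-1}(c))$ that triggers Hom-associativity at the right moment. Once the convention of moving $\a^{\pm1}$ through inverses is fixed by the preliminary step, all the remaining collapses via $xx^{-1}=1$ and $1x=\a(x)$ are routine.
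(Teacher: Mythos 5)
Your proposal is correct and takes essentially the same approach as the paper: for (i) you use the single Hom-associativity computation $\a(h)(gk)=(hg)\a(k)$ plus bijectivity of $\a^2$, which is exactly the paper's argument for the equality of left and right inverses (just streamlined, skipping its separate uniqueness steps for one-sided inverses), and for (ii) you verify that $b^{-1}a^{-1}$ inverts $ab$ by inserting a resolution $c=\a(\a^{-1}(c))$ and shuffling with Hom-associativity and multiplicativity of $\a$, your computation being the mirror image of the paper's (you untwist the right factor where the paper untwists the left). The only cosmetic difference is that you also check $(b^{-1}a^{-1})(ab)=1$, which the paper omits because its version of (i) already shows a one-sided inverse must equal the inverse.
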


\begin{proof}
  i) First we show that the right inverse is unique. Let us assume an element $g\in G$ has right inverses $a, b\in G$. So $ga=1$ and $gb=1$. Since there exists $g^{-1}\in G$ where $g^{-1}g= 1$, then
  $$\a(g^{-1}) (ga) = \a(g^{-1} ) 1.$$
  By Hom-associativity we have $$ (g^{-1}g) \a(a) = \a^2(g^{-1}).$$  By Hom-unitality we obtain $ \a^2(a)= \a^2( g^{-1})$. Since $\a$ is invertible then $ a=g^{-1}$. Similarly $b=g^{-1}$ and therefore $a=b$. Likewise we can prove that left inverse is unique. Now we show that left and right inverses are the same. Let $g\in G$, $ag=1$ and $gb=1$. Therefore $$(ag) \a(b)= 1\a(b).$$ By Hom-associativity we have $\a(a)(gb)= \a^2(b)$. Then $\a(a)1= \a^2(b)$. So $\a^2(a)=\a^2(b)$. By invertibility of $\a$ we  obtain $a=b$. Therefore the inverse element is unique.

ii) The following computations shows that  $b^{-1}a^{-1}$    is the inverse of $ab$.

\begin{align*}
  & (ab) (b^{-1}a^{-1})\\
  &=\a( \a^{-1}( ab)) [b^{-1}a^{-1} ]\\
  & =[\a^{-1}( ab)b^{-1}]\a(a^{-1})\\
  &= \left([ \a^{-1}(a) \a^{-1}(b)] b^{-1}\right) \a(a^{-1})\\
  &=\left([ \a^{-1}(a) \a^{-1}(b)] \a ( \a^{-1}(b^{-1}))\right) \a(a^{-1})\\
  &= \left [ a \left( \a^{-1}(b) \a^{-1}(b^{-1})\right)\right] \a(a^{-1})\\
  &= (a 1) \a(a^{-1})\\
  &= \a(a) \a(a^{-1}) =1.
\end{align*}
We used the Hom-associativity in the third equality, multiplicity of $\a$ in the fourth equality, and Hom-associativity in the fifth equality.
\end{proof}
The following proposition which was introduced in \cite{hassan1} provides a source of examples for Hom-groups.
   \begin{proposition}\label{group-to-Hom}
  Let $(G, \mu) $ be a group and $\a: G\longrightarrow G$ a group automorphism. Then $(G, \a\circ \mu, \a)$ is a Hom-group.
\end{proposition}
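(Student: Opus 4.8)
The plan is to write the twisted product as $g * h := \a(\mu(g,h)) = \a(gh)$, abbreviating the original group product $\mu(g,h)$ as $gh$, and then to verify the four Hom-group axioms one at a time. Each verification will reduce to just two ingredients: the associativity of $\mu$, and the fact that the automorphism $\a$ is multiplicative, fixes the group identity $e$, and commutes with inversion. The distinguished element and the inverses will be inherited unchanged from $(G,\mu)$; it is only the product that gets twisted by $\a$.

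First, for Hom-associativity (axiom i) I would expand both sides using the definition of $*$ and push $\a$ across products via the homomorphism property. On the left, $\a(g) * (h * k) = \a(g) * \a(hk) = \a\big(\a(g)\,\a(hk)\big) = \a^2(g\,hk)$, and on the right, $(g * h) * \a(k) = \a(gh) * \a(k) = \a\big(\a(gh)\,\a(k)\big) = \a^2(gh\,k)$; these agree by associativity of $\mu$. For multiplicativity (axiom ii) I would check directly that $\a(g * k) = \a(\a(gk)) = \a^2(gk)$ equals $\a(g) * \a(k) = \a\big(\a(g)\a(k)\big) = \a^2(gk)$, which holds because $\a$ is an automorphism. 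For the unit (axiom iii), I would take $1$ to be the group identity $e$: then $g * e = \a(ge) = \a(g)$ and $e * g = \a(eg) = \a(g)$, while $\a(e) = e$ because an automorphism fixes the identity. Finally, for the inverse axiom I would show that the group inverse $g^{-1}$ also serves as the Hom-inverse, since $g * g^{-1} = \a(gg^{-1}) = \a(e) = e = 1$ and likewise $g^{-1} * g = 1$.

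These computations are entirely routine and there is no genuine obstacle; the proposition is really just a bookkeeping exercise. The only point requiring a small amount of care is conceptual rather than technical: one must remember that the unit and the inverses of the new Hom-group are exactly those of the original group, and that the single place the automorphism enters is in twisting $\mu$ into $\a\circ\mu$. Keeping track of where each application of $\a$ lands, and invoking the automorphism property to commute $\a$ past products at every step, is all that is needed to close the argument.
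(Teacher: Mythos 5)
Your verification is correct, and it is the standard direct computation: the paper itself states this proposition without proof, citing \cite{hassan1}, so there is no in-paper argument to diverge from. Your four checks (Hom-associativity via $\a^2(g(hk))=\a^2((gh)k)$, multiplicativity of $\a$ for the twisted product, the group identity $e$ serving as Hom-unit since $g*e=\a(g)$ and $\a(e)=e$, and group inverses serving as Hom-inverses since $g*g^{-1}=\a(e)=e$) are exactly what is needed; the only point left implicit, which costs nothing, is that the paper's definition also requires the twisting map to be bijective, and this holds immediately because $\a$ is assumed to be an automorphism.
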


\begin{definition}
   A subset $H$ of a Hom-group $(G, \a)$ is called a Hom-subgroup \cite{hassan1}   of $G$ if $(H, \a)$ is itself a Hom-group. We denote a Hom-subgroup $H$ of $G$ by $H\preceq G$.
\end{definition}

\begin{definition}
  The set  $gH=\{ gh, ~~~ h\in H\}$ is called the left coset of the Hom-subgroup $H$ in $G$ with respect to  the element $g$. Similarly the set $Hg= \{ hg, ~~~ h\in H\}$   is called the right coset of $H$ in $G$.

\end{definition}
 We denote the number of elements of a Hom-group $G$ by $\mid G \mid.$

\begin{definition}
 The Center $Z(G)$ of a Hom-group $(G, \a)$ is the set of  all $x\in G$ where $xy=yx$ for all $y\in G$.
\end{definition}

\begin{proposition}
  Let $(G, \a)$ be a Hom-group. Then $Z(G)\preceq G$.
\end{proposition}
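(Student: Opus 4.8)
The plan is to verify that $(Z(G),\a)$ satisfies all the axioms of a Hom-group, so that it is a Hom-subgroup by definition. The two identities that are universally quantified over $G$ --- Hom-associativity (i) and multiplicativity of $\a$ (ii) --- are inherited by any subset, so they hold automatically on $Z(G)$; what genuinely needs checking is that $Z(G)$ contains $1$, is closed under the product, is preserved by $\a$ and $\a^{-1}$ (so that $\a$ restricts to a bijection of $Z(G)$), and is closed under inversion. The unit is immediate: $1y=\a(y)=y1$ for every $y$, so $1\in Z(G)$. For the twisting map, if $x\in Z(G)$ then multiplicativity gives $\a(x)\a(w)=\a(xw)=\a(wx)=\a(w)\a(x)$ for every $w$, and since $\a$ is surjective every element of $G$ has the form $\a(w)$; hence $\a(x)$ commutes with all of $G$, i.e. $\a(x)\in Z(G)$. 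The same argument applied to the multiplicative bijection $\a^{-1}$ shows $\a^{-1}(x)\in Z(G)$, so $\a$ restricts to a bijection $Z(G)\to Z(G)$.

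For closure, let $x,x'\in Z(G)$ and $y\in G$. The idea is to slide $y$ across $xx'$ using Hom-associativity together with the centrality of $x$ and $x'$:
\begin{align*}
(xx')\a(y) &= \a(x)(x'y) = \a(x)(yx') \\
&= (xy)\a(x') = (yx)\a(x') = \a(y)(xx').
\end{align*}
Here the first, third and fifth equalities are Hom-associativity, while the remaining two use $x'y=yx'$ and $xy=yx$. Since $\a$ is surjective, every element of $G$ is of the form $\a(y)$, so this shows $xx'$ commutes with every element of $G$; thus $xx'\in Z(G)$.

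The main obstacle is closure under inversion, since there is no associativity to move $x^{-1}$ freely past other elements. Given $x\in Z(G)$ and $y\in G$, I would exploit the two Hom-associative collapses
\begin{align*}
\a(x)(x^{-1}y) &= (xx^{-1})\a(y) = \a^2(y), \\
(yx^{-1})\a(x) &= \a(y)(x^{-1}x) = \a^2(y),
\end{align*}
which use $xx^{-1}=x^{-1}x=1$ and $1g=\a(g)$. Because $\a(x)\in Z(G)$ by the previous step, the first line can be rewritten as $(x^{-1}y)\a(x)=\a(x)(x^{-1}y)=\a^2(y)$. Comparing with the second line gives $(x^{-1}y)\a(x)=(yx^{-1})\a(x)$. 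Finally I would cancel $\a(x)$ on the right: by Theorem~\ref{quasigroup} the Hom-group $G$ is a quasigroup, so right translation by $\a(x)$ is injective, and therefore $x^{-1}y=yx^{-1}$. As $y$ was arbitrary, $x^{-1}\in Z(G)$.

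Collecting these facts, $1\in Z(G)$, the set $Z(G)$ is closed under the product and under inversion, and $\a$ restricts to a bijection of $Z(G)$ fixing the unit; together with the inherited Hom-associativity and multiplicativity this shows $(Z(G),\a)$ is a Hom-group, hence $Z(G)\preceq G$. The only nonroutine inputs are the systematic use of the surjectivity of $\a$ to promote ``commutes with every $\a(w)$'' to ``commutes with everything,'' and the quasigroup cancellation in the inversion step.
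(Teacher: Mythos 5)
Your proof is correct, and in places more careful than the paper's own. On closure under the product the two arguments coincide in substance: the paper writes a general element $a$ as $\a(\a^{-1}(a))$ and slides it across $xy$ via Hom-associativity and centrality, which is exactly your chain with $a=\a(y)$; your appeal to surjectivity of $\a$ is just another phrasing of the same trick. Where you genuinely diverge is closure under inversion. The paper has a one-line argument: for $x\in Z(G)$ and any $a\in G$, start from $xa^{-1}=a^{-1}x$ and apply the anti-homomorphism property $(ab)^{-1}=b^{-1}a^{-1}$ together with $(a^{-1})^{-1}=a$ to get $ax^{-1}=x^{-1}a$. You instead compute $\a(x)(x^{-1}y)=\a^2(y)=(yx^{-1})\a(x)$, use the already-established $\a$-stability of the center to commute $\a(x)$ past $x^{-1}y$, and cancel $\a(x)$ on the right. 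This costs more work but avoids the inverse-of-a-product lemma; note that your citation of Theorem \ref{quasigroup} is a forward reference in the paper's ordering (the quasigroup theorem comes after this proposition), though not a circularity, since its proof uses nothing about centers --- alternatively you could cancel by hand, since $u\a(x)=v\a(x)$ gives $\a^2(u)=(u\a(x))\a^2(x^{-1})=(v\a(x))\a^2(x^{-1})=\a^2(v)$ by Hom-associativity, multiplicativity, and invertibility of $\a$. Finally, you explicitly verify $1\in Z(G)$ and that $\a$ and $\a^{-1}$ preserve $Z(G)$, points the paper leaves implicit even though the latter is genuinely needed for $\a$ to restrict to a bijection of $Z(G)$ and hence for $(Z(G),\a)$ to be a Hom-group; in that respect your write-up is the more complete one.
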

\begin{proof}
Let $x, y\in Z(G)$. Then for all $a\in G$ we have
  \begin{align*}
   & (xy)a = (xy)(\a( \a^{-1}(a)))= \a(x) (y \a^{-1}(a)) = \a(x) [ \a^{-1}(a) y]\\
   &= [ x \a^{-1}(a)] \a(y)=  [\a^{-1}(a)x] \a(y) = a(xy).
  \end{align*}
Thus $Z(G)$ is closed under multiplication of $G$. Also if $x\in Z(G)$ and $a\in G$ then $xa^{-1} = a^{-1} x$. Then $(xa^{-1})^{-1} = (a^{-1} x)^{-1}.$ Therefore $a x^{-1}= x^{-1} a$. So $x^{-1} \in Z(G)$. Therefore $Z(G)$ is a Hom-subgroup of $G$.
\end{proof}

\begin{definition}
  The centralizer of an element $x\in G$ is the set of all elements $g\in G$ where $ gx=xg$  and it is denoted by  $C_G(x)$.
\end{definition}

\begin{proposition}
   Let $(G, \a)$ be a Hom-group. Then $C_G(x)\preceq G$ for all $x\in G$.
\end{proposition}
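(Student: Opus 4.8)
The plan is to mirror the proof that $Z(G)\preceq G$: I would check that $C_G(x)$ contains the unit, is closed under the product, is stable under $\a^{\pm1}$, and is closed under inverses, using only Hom-associativity (axiom i) and the multiplicativity of $\a$ (axiom ii), exactly as in the center computation.

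The easy points come first. Since $1x=\a(x)=x1$ we have $1\in Z(G)\subseteq C_G(x)$, and trivially $x\in C_G(x)$, so the set is nonempty. For the product I would take $g,h\in C_G(x)$ and try to run the center-style chain
\begin{align*}
(gh)x &=(gh)\,\a(\a^{-1}(x))=\a(g)\bigl(h\,\a^{-1}(x)\bigr)\\
&=\a(g)\bigl(\a^{-1}(x)\,h\bigr)=\bigl(g\,\a^{-1}(x)\bigr)\a(h)=\bigl(\a^{-1}(x)\,g\bigr)\a(h)=x(gh),
\end{align*}
the first and fourth equalities being Hom-associativity and the last being Hom-associativity read backwards. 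For the inverse I would apply $(\,\cdot\,)^{-1}$ to $gx=xg$ and invoke Lemma (ii) to rewrite $(gx)^{-1}=x^{-1}g^{-1}$, aiming at $g^{-1}\in C_G(x)$.

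The hard part lives in the third and fifth equalities above (and in the analogous inverse step). The defining relation $gx=xg$, after applying the multiplicative map $\a^{\pm1}$, yields only that $\a^{\pm1}(g)$ commutes with $\a^{\pm1}(x)$; it does not by itself give that $g$ commutes with $\a^{-1}(x)$, which is precisely what the rebracketing demands. Equivalently, since $1\in C_G(x)$ and $1x=\a(x)$, closure of $C_G(x)$ under the product already forces $\a(x)\in C_G(x)$, that is, $\a(x)$ must commute with $x$. So the genuine content is an $\a$-compatibility statement: one must show that $C_G(x)$ is $\a$-stable and that $g\in C_G(x)$ implies $g$ commutes with $\a^{-1}(x)$ and with $x^{-1}$ as well. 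Once this is secured the displayed chain closes verbatim and the remaining Hom-subgroup axioms (Hom-associativity, the unit, existence of inverses) are inherited from $G$; establishing that $\a$-compatibility—whether by tracking how $\a$ permutes the elements involved or by leaning on an additional feature of $x$ such as $\a(x)=x$—is the step I expect to carry the weight of the argument.
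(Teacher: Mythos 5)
Your proposal stops exactly where it should, and the obstruction you isolate is real --- in fact it is a gap in the paper itself. The paper's entire proof of this Proposition is the sentence ``The proof is similar to previous Proposition,'' but the analogy breaks at precisely the equalities you flag: in the center computation the steps $y\,\a^{-1}(a)=\a^{-1}(a)\,y$ and $x\,\a^{-1}(a)=\a^{-1}(a)\,x$ are free because central elements commute with \emph{everything}, whereas for $g,h\in C_G(x)$ the same rebracketing needs $h\,\a^{-1}(x)=\a^{-1}(x)\,h$, and applying the multiplicative bijection $\a^{-1}$ to $hx=xh$ only yields that $\a^{-1}(h)$ commutes with $\a^{-1}(x)$. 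Worse, your necessary condition (closure forces $\a(x)\in C_G(x)$, since $1,x\in C_G(x)$ and $1\cdot x=\a(x)$) actually fails in the paper's own Hom-dihedral example $D_3^{\a}$: taking $x=rs$, the Cayley table gives $C_G(rs)=\{1,rs\}$, yet $1\cdot rs=\a(rs)=sr\notin C_G(rs)$ (and indeed $sr\cdot rs=r^2\neq r=rs\cdot sr$). So $C_G(x)$ need not even be closed under the product, and the Proposition is false as stated. It becomes true under exactly the $\a$-compatibility you call for: assume $\a(x)=x$, or more generally $\a(C_G(x))=C_G(x)$ (equivalently $C_G(\a(x))=C_G(x)$, since $\a(C_G(x))=C_G(\a(x))$), or redefine the centralizer as the commutant of the $\a$-orbit, $\{g\in G:\ g\,\a^{k}(x)=\a^{k}(x)\,g \text{ for all } k\in\mathbb{Z}\}$. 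Under any of these, $h\in C$ gives $\a(h)\in C$, and applying $\a^{-1}$ to $\a(h)x=x\a(h)$ yields $h\,\a^{-1}(x)=\a^{-1}(x)\,h$, after which your center-style chain closes verbatim.

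One place where you were more pessimistic than necessary: closure under inverses needs no $\a$-hypothesis at all. From $gx=xg$, Hom-associativity gives $\a(x^{-1})(gx)=(x^{-1}g)\,\a(x)$ and $\a(x^{-1})(xg)=(x^{-1}x)\,\a(g)=\a^{2}(g)$, so $(x^{-1}g)\,\a(x)=\a^{2}(g)$; on the other hand $(gx^{-1})\,\a(x)=\a(g)(x^{-1}x)=\a^{2}(g)$ as well, and right cancellation (Theorem \ref{quasigroup}) forces $g\,x^{-1}=x^{-1}g$. Inverting both sides via the identity $(ab)^{-1}=b^{-1}a^{-1}$ then gives $g^{-1}x=x\,g^{-1}$, i.e.\ $g^{-1}\in C_G(x)$ unconditionally. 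So the unit, inverses, and Hom-associativity are all inherited; the single genuine failure point is the one you identified --- multiplicative closure, equivalently $\a$-stability of $C_G(x)$ --- and no blind proof could have closed it, because without an added hypothesis the statement is refuted by the paper's own example.
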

\begin{proof}
  The proof is similar to previous Proposition.
\end{proof}

    Let $(G, \a)$ and $(H, \b)$ be two Hom-groups. The morphism $f: G\longrightarrow H$ is called a morphism of Hom-groups \cite{hassan1}, if $\b(f(g))=f(\a(g))$ and $f(gk)=f(g)f(k)$ for all $g,k\in G$.
  Two Hom-groups $G$ and $H$ are called isomorphic if there exist a  bijective morphism of Hom-groups $f: G\longrightarrow H$.

\begin{example}{\rm
  Let $(G, \a)$ and $(G', \a')$ be two Hom-groups. Then $(G\times G', \a\times \a')$ is a Hom-group by the multiplication given by $(g, h) (g', h')= (gg', hh')$.
  }
\end{example}

Here we recall the definition of a quasigroup. A quasigroup $(Q, \ast)$ is a set $Q$  with a multiplication $\ast: Q\times Q\longrightarrow Q$,  where  for all $a , b\in Q$,  there exist unique elements $x , y\in Q$ such that

$$a \ast x =b, ~~~~~~~~~ y\ast a=b.$$

In the following theorem we show that Hom-groups are a class of quasigroups.
\begin{theorem}\label{quasigroup}
  Every Hom-group $(G, \a)$   is a quasigroup.
\end{theorem}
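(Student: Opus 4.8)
The plan is to show that for any $a, b \in G$, the equations $ax = b$ and $ya = b$ each have a unique solution. Since $(G, \a)$ is a Hom-group, every element $a$ has a two-sided inverse $a^{-1}$ (axiom (v)), and $\a$ is a bijection. The natural candidate solution for $ax=b$ should be built from $a^{-1}$, $b$, and suitable powers of $\a$; the presence of $\a$ in the Hom-associativity law means I cannot simply multiply by $a^{-1}$ on the left as in a group, so the exponents of $\a$ must be tracked carefully.

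**First I would** establish existence for $ax = b$. The idea is to guess $x = \a^{-1}(a^{-1})\,\a^{-1}(b)$ (or a similar expression tuned so the $\a$'s cancel) and verify $ax = b$ by a direct computation using Hom-associativity and Hom-unitality, exactly in the style of the proof that $(ab)^{-1} = b^{-1}a^{-1}$. Concretely, writing $a = \a(\a^{-1}(a))$ and applying the Hom-associativity identity $\a(g)(hk) = (gh)\a(k)$ should let me regroup $a\bigl(\a^{-1}(a^{-1})\,\a^{-1}(b)\bigr)$ into $\bigl(\a^{-1}(a)\a^{-1}(a^{-1})\bigr)\a(\a^{-1}(b)) = (\,\cdot\, 1)\,b$-type expressions, collapsing via $a^{-1}a = 1$, the unit axiom $g1 = \a(g)$, and multiplicativity of $\a$ down to $b$. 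The symmetric computation, using the inverse on the right, handles $ya = b$.

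**For uniqueness,** suppose $ax = ax' = b$. I would multiply both sides on the left by $\a^{-1}(a^{-1})$ (or the appropriately $\a$-shifted inverse) and apply Hom-associativity to peel off $a$: the product $\a^{-1}(a^{-1})\bigl(ax\bigr)$ regroups as $\bigl(\a^{-2}(a^{-1})\,\a^{-1}(a)\bigr)\a(x)$-type terms, reducing through $a^{-1}a = 1$ and the unit axiom to a power $\a^{k}(x)$; hence $\a^k(x) = \a^k(x')$, and injectivity of $\a$ forces $x = x'$. The same argument on the right yields uniqueness of $y$ with $ya = b$. I can also note that existence plus finiteness (in the finite case) gives uniqueness for free, but since the theorem is stated for all Hom-groups I would argue uniqueness directly.

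**The main obstacle** I anticipate is purely bookkeeping: getting the powers of $\a$ in the candidate solution exactly right so that every application of Hom-associativity and of the unit rule $g1 = \a(g)$ lands the residual $\a$-exponents at zero (i.e.\ recovers $b$ rather than some $\a^k(b)$). This is not conceptually deep—it is the same twisted-cancellation bookkeeping already exercised in the preceding lemma—but it is the only place an error is likely to creep in. Once the correct exponents are pinned down, both existence and uniqueness follow from the same three ingredients (Hom-associativity, the inverse axiom, and the twisted unit axiom) together with the bijectivity of $\a$.
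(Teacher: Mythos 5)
Your approach is essentially the paper's: exhibit an explicit solution built from $a^{-1}$, $b$, and negative powers of $\a$, verify it with one application of Hom-associativity followed by $aa^{-1}=1$ and Hom-unitality, and treat $ya=b$ symmetrically. One correction, at exactly the spot you flagged as the likely hazard: your candidate $x=\a^{-1}(a^{-1})\,\a^{-1}(b)$ is off by one power of $\a$, since the regrouping yields $\bigl(\a^{-1}(a)\a^{-1}(a^{-1})\bigr)\a\bigl(\a^{-1}(b)\bigr)=\a^{-1}(1)\,b=1\cdot b=\a(b)$ rather than $b$ --- the twisted unit rule $1\cdot c=\a(c)$ contributes a final extra $\a$; the paper's candidate is $x=\a^{-1}(a^{-1})\,\a^{-2}(b)$, which lands on $1\cdot \a^{-1}(b)=b$, with $y=\a^{-2}(b)\,\a^{-1}(a^{-1})$ for the second equation. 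Your direct uniqueness argument (left-multiply by the shifted inverse, reduce to $\a^{k}(x)=\a^{k}(x')$, invoke injectivity of $\a$) is sound and in fact goes beyond the paper, whose proof only exhibits existence and leaves uniqueness implicit, even though uniqueness is part of the quasigroup definition and is what is actually invoked later in Lemma \ref{Hom-coset}.
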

\begin{proof}
First we show that the $x= \a^{-1}(a^{-1})\a^{-2}(b)$ satisfies the equation $ax=b$.

\begin{align*}
  & ax \\
  &=  a \left( \a^{-1}(a^{-1})\a^{-2}(b) \right)\\
  &=  \a( \a^{-1}(a))\left( \a^{-1}(a^{-1})\a^{-2}(b) \right)\\
  &= \left(\a^{-1}(a)\a^{-1}(a^{-1})\right)\a^{-1}(b) \\
  &=  \a^{-1}(a a^{-1}) \a^{-1}(b) \\
  &=\a^{-1}(1) \a^{-1}(b)\\
  &= 1 \a^{-1}(b)=b.
\end{align*}
We used the Hom-associativity in the third equality, invertibility of $a$ in the fifth equality and the Hom-unitality in the last equality. Similarly $y= \a^{-2}(b)\a^{-1}(a^{-1})$ satisfies the equation $y a=b.$  Therefore $G$ is a quasigroup.
\end{proof}

Since the element $1\in G$ is an idempotent element then any Hom-group is a pointed idempotent quasigroup (piques). This is an interesting class of quasigroups which have been under intensive research \cite{s}, \cite{cps}.

\begin{remark}
  {\rm (\textbf{Cayley table of finite Hom-groups})\\

Since every Hom-group is a quasi group, then the Cayley table of a Hom-group has all the properties of the one for quasigroup. However having invertibility and Hom-unitality conditions, one obtains more properties. We put all the different elements of $G$ in the first row and column such that the  Hom-unit $1$ is  in the first place. For simplicity we use the matrix notation $[c_{ij}]$ for the Cayley matrix.
Some of the properties of Hom-groups are as follows:\\

i) Every row and column of the Cayley table of a Hom-group $(G, \a)$ is a permutation of the set $G$. This is because Hom-groups have the cancelation property.
In fact the Cayley table of a Hom-group is an example of a Latin square.

ii) Rows and columns can not be the  identity permutation of $G$ except $\a=\Id$, which means $G$ is a group.

iii) If the element in the $i^{th}$ row and $j^{th}$ column is  $1$ then the element in the $j^{th}$ row  and   $i^{th}$ column  also should be $1$. This is because of the invertibility condition.

iv) The Cayley table is symmetric if and only if $(G, \a)$ is abelian.

v) The first row and first column are the same, because $ 1 a= a1 = \a(a)$.

  }
\end{remark}

\begin{example}  {\rm \textbf{(Classification of Hom-groups of order $3$)}.

  In this example we show that there is only one Hom-group of order $3$.
  We use the Cayley table to classify all the Hom-groups of order $3$. Let $G= \{ 1, a, b\}$.  To  fill out the Cayley table, we start from the first row. Since by property  (ii), the first row can not be the identity permutation of $G$, therefore the only possibility is  $c_{12}=b$ and $c_{13}=a$. Since the first column is the same as the first row therefore there will be only $4$ spots $c_{22}, c_{23}, c_{32}, c_{33}$ to find. Now we argue on the place of the unit $1$ in the second row. We note that $c_{22}\neq 1$ because otherwise $c_{23}=a$ which is a contradiction as the third column will have two copies of $a$. Therefore $c_{22}=a$ and $c_{23}=1$. Now since the second and the third columns should be a permutation of $G$ then there is only one case left which is the following Cayley table:

\begin{equation*}
\begin{array}{|c|c|c|c|}
\hline
\ G &1& a & b\\ \hline
1 & 1 &b  & a\\ \hline
a & b& a &1\\ \hline
b &a & 1 &b \\ \hline
\end{array}%
\end{equation*}%
This defines the twisting map by $\a(a)=b$ and $\a(b)=a$. One can check that $(G, \a)$ with above Cayley table satisfies the Hom-associativity and therefore it is an abelian Hom-group.
This Hom-group is isomorphic to $Z_3^{\a}$  where the multiplication is obtained by twisting the multiplication of the additive cyclic group $Z_3$ by the group automorphism $\a: Z_3\longrightarrow Z_3$ given by $\a(1)=2$. See Proposition \ref{group-to-Hom}.
  }
\end{example}

We finish this section by introducing a non abelian Hom-group of order $6$.

\begin{example}
  {\rm \textbf{(Hom-Dihedral group $D_3^{\a}$)}

 We recall that the Dihedral group $D_3$ is the smallest non-abelian group which is given  by $\{ r, s, ~\mid ~ r^3=s^2=1, ~~~~ srs=r^{-1}\}$. In fact the elements of $D_3$ are $\{1, r, r^2, s, sr, rs\}$. We consider  the conjugation automorphism $\varphi_s (x) = sx s^{-1}$. Now we twist the multiplication of $D_3$ by $\varphi_s$, as explained in Proposition \ref{group-to-Hom}, to obtain a Hom-group $ D_3^{\a}$ where $\a= \varphi_s$. The Cayley table is given by
\begin{equation*}
\begin{array}{|c|c|c|c|c|c|c|}
\hline
\ D_3^{\a}  & 1 & r & r^2& s&rs&sr\\ \hline
1& 1 & r^2 &r & s &sr&rs\\ \hline
r &r^2& r & 1 &sr &rs&s\\ \hline
r^2 & r & 1& r^2 & rs &s&sr \\ \hline
s & s& rs & sr & 1 &r&r^2 \\ \hline
rs&sr& s& rs& r^2&1&r \\  \hline
sr& rs& sr&s& r&r^2&1\\  \hline
\end{array}%
\end{equation*}%

  }
\end{example}


\section{ Lagrange's theorem for a class of quasigroups}

Lagrange-type's theorem for nonassociative structures (magmas) is a challenging problem due to nonassociativity.
In this section we focus on finite Hom-groups. We prove  Lagrange's theorem for this interesting class of quasigroups. This in fact generalizes the theorem for groups. The Hom-associativity condition plays an important rule in our proof. First we need the following lemma which shows that the number of elements of a Hom-subgroup and  its cosets are the same.

\begin{lemma}\label{Hom-coset}
  Let $(G, \a)$ be a  finite Hom-group. If $H\preceq G$, then $\abs{ g H}  = \abs{H}$ for all $g\in G$.
\end{lemma}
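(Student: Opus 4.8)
The plan is to establish the bijection $H \to gH$ given by left multiplication $h \mapsto gh$ and show it is both well-defined onto $gH$ and injective, which immediately yields $\abs{gH} = \abs{H}$ since $G$ is finite. The map is surjective by the very definition of $gH = \{gh : h \in H\}$, so the entire content lies in proving injectivity, i.e., the left cancellation property: if $gh_1 = gh_2$ for $h_1, h_2 \in H$, then $h_1 = h_2$.

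First I would prove this cancellation using the same technique that appears in the proof of Lemma $2.2$(i). Starting from $gh_1 = gh_2$, I would multiply both sides on the left by $\a(g^{-1})$ to get $\a(g^{-1})(gh_1) = \a(g^{-1})(gh_2)$. Applying Hom-associativity to each side converts these into $(g^{-1}g)\a(h_1) = (g^{-1}g)\a(h_2)$, and then Hom-unitality together with $g^{-1}g = 1$ gives $1\cdot \a(h_1) = 1 \cdot \a(h_2)$, that is $\a^2(h_1) = \a^2(h_2)$. Since $\a$ is bijective by hypothesis, applying $\a^{-2}$ yields $h_1 = h_2$, completing the proof of injectivity.

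I would then conclude: the map $\varphi: H \to gH$ defined by $\varphi(h) = gh$ is surjective by definition and injective by the cancellation just established, hence a bijection between finite sets, so $\abs{gH} = \abs{H}$.

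The main obstacle, and the reason this lemma needs a separate argument at all, is that one cannot cancel $g$ directly as in a group; the twisting by $\a$ forces the cancellation identity to emerge only after the deliberate choice of multiplier $\a(g^{-1})$ (rather than $g^{-1}$), which is precisely what makes Hom-associativity collapse the left factor into $g^{-1}g = 1$. The remaining subtlety is ensuring that $\varphi$ genuinely lands in $gH$ and that the counting argument uses finiteness of $G$ (so that a bijection of sets is the same as equality of cardinalities); both of these are routine once injectivity is in hand.
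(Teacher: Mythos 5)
Your proof is correct and takes essentially the same approach as the paper: both reduce the lemma to injectivity of $h \mapsto gh$. The only cosmetic difference is that the paper cites the unique solvability of $ax=b$ from Theorem \ref{quasigroup} to get $h_i = \a^{-1}(g^{-1})\a^{-2}(b) = h_j$, whereas you reprove the cancellation directly by multiplying by $\a(g^{-1})$ and applying Hom-associativity and bijectivity of $\a$ --- which is the very mechanism underlying that theorem, and arguably makes your version more self-contained, since the paper's proof of Theorem \ref{quasigroup} only verifies existence of solutions explicitly.
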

\begin{proof}
It is enough to show that for $h_i\neq h_j$,  the elements  $g h_i$ and $g h_j$ are different in $gH$. Suppose $ gh_i= gh_j=b$. By Theorem \ref{quasigroup} we have
$$h_i = \a^{-1}(g^{-1})\a^{-2}(b) = h_j.$$

\end{proof}

\begin{lemma}
  Let $(G, \a)$ be a finite  Hom-group  and $H\preceq G$. Then $g H= H$ if and only if  $g\in H$.

\end{lemma}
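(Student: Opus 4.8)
The statement to prove is: for a finite Hom-group $(G,\a)$ and $H\preceq G$, we have $gH=H$ if and only if $g\in H$. I will prove the two implications separately, and the main work will be the forward direction, where I must show that $gH=H$ forces $g\in H$.

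The plan is to start with the easier reverse direction. Assume $g\in H$. Since $(H,\a)$ is itself a Hom-group, $H$ is closed under the multiplication of $G$, so $gH=\{gh : h\in H\}\subseteq H$. By Lemma \ref{Hom-coset} we have $\abs{gH}=\abs{H}$, and since $G$ is finite and $gH\subseteq H$ with equal cardinality, it follows that $gH=H$. Here I would want to be slightly careful and confirm that closure of $H$ under the product is indeed part of what it means for $(H,\a)$ to be a Hom-group (the binary operation $\mu$ must restrict to $H\times H\to H$); this is guaranteed by the definition of Hom-subgroup.

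For the forward direction, suppose $gH=H$. The key observation is that $H$, being a Hom-group, contains the unit $1$ (and is closed under $\a$, which on $H$ is the same bijective map). Since $1\in H$ and $gH=H$, there exists some $h\in H$ with $gh=1$. But then $h$ is the (two-sided, by Lemma 2.2(i) unique) inverse of $g$, i.e.\ $h=g^{-1}\in H$. The remaining step is to deduce $g\in H$ from $g^{-1}\in H$: since $H$ is a Hom-group, it is closed under taking inverses, so $(g^{-1})^{-1}\in H$, and because inverses are unique (Lemma 2.2(i)) one checks $(g^{-1})^{-1}=g$, giving $g\in H$.

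The main obstacle I anticipate is purely the bookkeeping of which Hom-group axioms transfer to $H$: I must use that a Hom-subgroup is closed under the product, contains $1$, and is closed under inversion, all of which follow from $(H,\a)$ itself being a Hom-group. A secondary subtlety is justifying $(g^{-1})^{-1}=g$; this is immediate from $g^{-1}g=gg^{-1}=1$ together with the uniqueness of inverses established in Lemma 2.2(i). No Hom-associativity computation seems to be needed here beyond invoking Lemma \ref{Hom-coset} and uniqueness of inverses, so I expect the argument to be short once the Hom-subgroup structure of $H$ is used correctly.
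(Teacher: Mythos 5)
Your proof is correct, but your forward direction takes a genuinely different route from the paper's. The paper argues directly from Hom-unitality: since $1\in H$, we get $\a(g)=g1\in gH=H$, and because $\a$ restricts to a bijection of $H$ (so $\a(H)=H$), invertibility of $\a$ gives $g\in H$ --- no inverses appear at all. You instead extract an inverse from $1\in gH$: some $h\in H$ satisfies $gh=1$, uniqueness of two-sided inverses identifies $h=g^{-1}$, and closure of $H$ under inversion plus $(g^{-1})^{-1}=g$ yields $g\in H$. This is the classical group-theoretic argument transplanted to the Hom setting, and it goes through precisely because bijectivity of $\a$ makes inverses unique and two-sided (the paper's first lemma). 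The paper's version is shorter and isolates the one Hom-specific ingredient, namely that $\a$ maps $H$ onto $H$; yours avoids invoking $\a(H)=H$ explicitly at the cost of the inverse bookkeeping, and has the mild virtue of showing the statement really only needs $1\in H$ and closure of $H$ under inversion. Your reverse direction is identical to the paper's (closure gives $gH\subseteq H$, and Lemma \ref{Hom-coset} upgrades the inclusion to equality by cardinality). One shared implicit point, which neither you nor the paper spells out but which is harmless: that the unit of the Hom-subgroup $H$ coincides with the unit $1$ of $G$. This follows from cancellation (Theorem \ref{quasigroup}): if $1_H$ is the unit of $(H,\a)$, then $1_H 1=\a(1_H)=1_H=1_H 1_H$, and left-cancelling $1_H$ gives $1=1_H$.
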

\begin{proof}
  If $g\in H$ then  $gH\subseteq H$. However by Lemma \ref{Hom-coset} we have $\abs{gH} = \abs{H}$. Therefore $ gH=H$.
   Conversely if $g H= H$ then $g 1\in H$. So $\a(g)\in H$. Since $\a$ is invertible and $\a(H)= H$ then  $g\in H$.
\end{proof}
\begin{lemma}
   Let $(G, \a)$ be a finite  Hom-group and $H\preceq G$. For all $x,y\in G$ if $xH ~ \cap ~ yH \neq\emptyset$ then $xH = yH$.
\end{lemma}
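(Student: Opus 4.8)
The plan is to show that both $xH$ and $yH$ coincide with a single left coset determined only by a common element, and hence with each other. First I would pick $w \in xH \cap yH$ and write $w = xh_1 = yh_2$ with $h_1, h_2 \in H$. The central claim I would isolate is this: whenever $w = xh_1$ with $h_1 \in H$, the coset $xH$ equals $\a^{-1}(w)H$. The point is that the right-hand side depends on $w$ and $H$ alone, and not on $x$ nor on the chosen factor $h_1$. Granting this claim, applying it once to $w = xh_1$ and once to $w = yh_2$ gives $xH = \a^{-1}(w)H = yH$, which is exactly the assertion.

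To prove the central claim I would first invert the relation $xh_1 = w$ using the quasigroup structure of Theorem \ref{quasigroup}: solving $Xh_1 = w$ for $X$ yields $x = \a^{-2}(w)\,\a^{-1}(h_1^{-1})$. Then for an arbitrary $h \in H$ I would compute $xh$ by rebracketing. The convenient form of Hom-associativity here is $(ab)d = \a(a)\,(b\,\a^{-1}(d))$, obtained from $\a(a)(bc) = (ab)\a(c)$ by putting $c = \a^{-1}(d)$. Applying it with $a = \a^{-2}(w)$, $b = \a^{-1}(h_1^{-1})$, $d = h$, together with the multiplicativity of $\a^{-1}$, collapses the computation to
\[ xh = \a^{-1}(w)\,\a^{-1}(h_1^{-1}h). \]
Here the factor carrying $x$ has been absorbed entirely into $\a^{-1}(w)$.

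Finally I would let $h$ range over $H$. Since $H$ is itself a Hom-group, Theorem \ref{quasigroup} applied inside $H$ shows that $h \mapsto h_1^{-1}h$ is a bijection of $H$, and since $\a(H) = H$ the map $h \mapsto \a^{-1}(h_1^{-1}h)$ is again a bijection of $H$ onto $H$. Hence $\{\a^{-1}(h_1^{-1}h) : h \in H\} = H$, so $xH = \a^{-1}(w)H$ as claimed, and the identical computation with $h_2$ in place of $h_1$ gives $yH = \a^{-1}(w)H$. I expect the main obstacle to be precisely this rebracketing step: one must steer the $\a$-powers so that the dependence on $x$ and on the auxiliary element $h_1$ vanishes and only $\a^{-1}(w)$ survives, which is what makes the coset depend on the common element alone. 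A cardinality shortcut (using $\abs{xH} = \abs{yH}$ from Lemma \ref{Hom-coset} and establishing only one inclusion $xH \subseteq yH$) is also available, but it passes through essentially the same reassociation, so I would present the symmetric $\a^{-1}(w)H$ argument as the cleaner route.
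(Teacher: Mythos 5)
Your proof is correct; every step checks out, including the key computation $xh = \a^{-1}(w)\,\a^{-1}(h_1^{-1}h)$ and the use of the quasigroup property of $(H,\a)$ to see that $h \mapsto \a^{-1}(h_1^{-1}h)$ maps $H$ onto $H$. The paper takes a related but structurally different route: it solves for $x$ in terms of $y$, obtaining $x = \a^{-1}(y)\,\a^{-2}(h_2h_1^{-1})$, uses the same Hom-associativity rebracketing to prove the single inclusion $xH \subseteq yH$, and then closes with the counting argument $\abs{xH} = \abs{H} = \abs{yH}$ from Lemma \ref{Hom-coset} --- precisely the cardinality shortcut you mention and decline. The difference is not cosmetic. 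The paper's counting step is where finiteness of $G$ genuinely enters, whereas your symmetric normalization $xH = \a^{-1}(w)H = yH$ replaces counting by surjectivity of left translation inside $H$ (Theorem \ref{quasigroup} applied to the Hom-group $H$), and therefore proves the lemma for arbitrary Hom-groups, finite or not. Your argument also isolates a reusable fact of independent interest: whenever $w \in xH$, the coset is determined by $w$ alone via $xH = \a^{-1}(w)H$, the Hom-group analogue of the classical ``$w \in xH$ implies $wH = xH$,'' with the twist $\a^{-1}$ recording the failure of strict unitality. The trade-off is that the paper's proof is shorter given that Lemma \ref{Hom-coset} is already established for the Lagrange argument, while yours is self-contained on the equality side and slightly stronger in scope.
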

\begin{proof}
Since $xH \cap yH \neq\emptyset$,    there exists $h_1,h_2\in H$ such that $xh_1= yh_2$.  Then by Theorem \ref{quasigroup} we have

 $$x= \a^{-2}(yh_2) \a^{-1}(h_1^{-1}).$$

 By invertibility of $\a$ we obtain
$$x=[\a^{-2}(y) \a^{-2}(h_2)] \a(\a^{-2}(h_1^{-1})).$$ By Hom-associativity we have
$$x= \a^{-1}(y)[ \a^{-2} (h_2)\a^{-2}(h_1^{-1})]= \a^{-1}(y)[ \a^{-2} (h_2h_1^{-1})].$$ Now we show that $xH \subseteq yH$. Let $xh\in xH$. Then
\begin{align*}
  xh&= \left[ \a^{-1}(y)[ \a^{-2} (h_2h_1^{-1})]\right]h \\
  &=\left[ \a^{-1}(y)[ \a^{-2} (h_2h_1^{-1})]\right]\a(\a^{-1}(h))\\
  &= y[ \a^{-2} (h_2h_1^{-1})\a^{-1}(h)].
\end{align*}
We used invertibility of $\a$ in the second equality, and Hom-associativity in the third equality. Since $\a(H)= H$ then
$$x=y[ \a^{-2} (h_2h_1^{-1})\a^{-1}(h)]\in yH. $$
Therefore $xH\subseteq yH$. By Lemma \ref{Hom-coset} we have $\abs{xH} =\abs{H}=\abs{yH}$. Therefore $xH=yH$.
\end{proof}

As a consequence of the previous results  we obtain the following proposition.

\begin{proposition}
   Let $(G, \a)$ be a finite  Hom-group  and $H\preceq G$. Then the set of all cosets of $H$ in $G$ gives  a partition of the set  $G$.
\end{proposition}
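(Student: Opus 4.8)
The plan is to show that the collection of left cosets $\{gH : g \in G\}$ satisfies the two defining properties of a partition: every element of $G$ lies in some coset, and any two distinct cosets are disjoint. Both of these follow almost immediately from the lemmas already established, so the proof is essentially a bookkeeping argument assembling the previous results.

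First I would verify the covering property: every element $g \in G$ belongs to at least one coset. The natural candidate is $gH$ itself, so it suffices to check that $g \in gH$. By the Hom-unitality axiom we have $g1 = \a(g)$, and since $H$ is itself a Hom-group we know $1 \in H$; hence $\a(g) = g1 \in gH$. Because $\a$ is a bijection on $G$ with $\a(H) = H$, writing $g = \a(\a^{-1}(g))$ shows that $g$ lies in the coset $\a^{-1}(g)H$. Thus each element of $G$ is contained in some left coset, which gives $G = \bigcup_{g \in G} gH$. This is the step I would state carefully, since the idempotent-but-twisted nature of the unit means $g \in gH$ is not quite as automatic as in the classical group case and one must route through $\a^{-1}$.

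Next I would invoke the disjointness directly from the preceding lemma: for all $x, y \in G$, if $xH \cap yH \neq \emptyset$ then $xH = yH$. Taking the contrapositive, two cosets that are not equal must have empty intersection, so any two distinct cosets in the collection are disjoint. Combining this with the covering property from the previous paragraph establishes that the set of distinct left cosets of $H$ partitions $G$.

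The only genuine subtlety, and the one I expect to require the most care, is the covering step rather than the disjointness step: one must confirm $g \in gH$ despite the unit acting by the twist $\a$ rather than as a strict identity. Once that containment is secured, the disjointness lemma does all the remaining work, and the conclusion that the cosets form a partition follows formally. I would therefore devote the bulk of the write-up to the $g \in gH$ verification and treat the assembly into a partition as a short concluding remark.
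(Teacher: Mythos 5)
Your proof is correct and follows exactly the route the paper intends: the paper states this proposition without proof as ``a consequence of the previous results,'' namely the disjointness lemma you invoke, and your assembly matches that. Your one substantive addition, the covering step $g = \a^{-1}(g)\cdot 1 \in \a^{-1}(g)H$, is exactly the right fix for the twisted unit (note that $g \in gH$ itself need not hold, only $\a(g) \in gH$, so your closing phrase ``confirm $g \in gH$'' should read ``confirm $g \in \a^{-1}(g)H$''), and it supplies the detail the paper silently uses later when writing $G = \cup_{x\in G}\, xH$ in the proof of Lagrange's theorem.
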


\begin{theorem}(\textbf{Lagrange's theorem for Hom-groups})

   Let $(G, \a)$ be a finite  Hom-group and $H\preceq G$. Then $\abs{H}$ divides $\abs{G}$.
\end{theorem}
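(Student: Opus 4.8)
The plan is to reduce the statement to the classical counting argument, now licensed by the Hom-group versions of the coset lemmas established above. Since the preceding Proposition asserts that the distinct left cosets of $H$ in $G$ form a partition of $G$, I would begin by enumerating these distinct cosets as $x_1 H, \dots, x_k H$, where $k$ denotes their number (the analogue of the index $[G:H]$). Being a partition, these cosets are pairwise disjoint and their union is all of $G$.

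Next I would invoke Lemma \ref{Hom-coset}, which gives $\abs{x_i H} = \abs{H}$ for every $i$. Counting the elements of $G$ across the disjoint blocks of the partition then yields
$$\abs{G} = \sum_{i=1}^{k} \abs{x_i H} = k\,\abs{H}.$$
Hence $\abs{H}$ divides $\abs{G}$, with quotient $k$, and nothing further is required.

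The only genuinely Hom-specific subtlety — the step I would flag as the heart of the matter — is that the cosets actually cover $G$, which is precisely where this argument departs from the classical one. In an ordinary group one has $g\in gH$ because $g = g1$; in a Hom-group this fails, since $g1 = \a(g)$ rather than $g$. The remedy, already packaged into the Proposition, is that $g$ instead lies in the coset $\a^{-1}(g)H$, because $\a^{-1}(g)\cdot 1 = \a(\a^{-1}(g)) = g$; this is exactly the point at which the invertibility of $\a$ is indispensable. Once the covering and the pairwise disjointness are in hand, together with the equicardinality supplied by Lemma \ref{Hom-coset}, the divisibility follows immediately without any additional computation.
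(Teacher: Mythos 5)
Your proposal is correct and follows essentially the same route as the paper: the partition Proposition plus Lemma \ref{Hom-coset} give pairwise disjoint, equicardinal blocks, and counting yields $\abs{G} = k\abs{H}$. Your explicit verification of the covering step --- that $g \in \a^{-1}(g)H$ since $g1 = \a(g)$ rather than $g$ --- is a welcome refinement, as the paper leaves this point implicit when it writes $G = \cup_{x\in G}\, xH$.
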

\begin{proof}
  By the previous Proposition the cosets of $H$ in $G$ gives a partition of $G$.  By Lemma \ref{Hom-coset} the size of all cosets are the same as the size of $H$. Since $G = \cup_{x\in G} xH$, then $\abs{G}$ is a multiplication of $\abs{H}$.
\end{proof}

\begin{example}{\rm

  Let $G$ be a group, $\a: G \longrightarrow G$ a group automorphism and $H\preceq G$ which is preserved by $\a$, i.e, $\a(H)=H$.  We twist the multiplication of $G$ by $\a$ to obtain the Hom-group $G_{\a}$ as we explained in Proposition \ref{group-to-Hom}. Clearly we have $H\preceq G_{\a}$.
  One notes that if $\a$ does not preserve $H$ then $H$ will not be a Hom-subgroup of $G_{\a}$ because $ \a(h)= 1h \in H$. Therefore studying group of $Aut(G)$ has an important rule to have examples of Hom-(sub)groups of $G_{\a}$.  As an example if $\a\in Inn(G)$ and $N \triangleleft G$ then $\a(N)=N$ and therefore $N \preceq G_{\a}$.

  }
\end{example}

\begin{example}{\rm
 All cyclic groups of order $6$ are isomorphic to $Z_6$. We define the group automorphism $\a: Z_6\longrightarrow Z_6$ given by

\begin{align*}
  & \a(1)=5,~ \a(2) =4,~ \a(3)=3, ~\a(4) =2,~ \a(5)=1, ~\a(0)=0
\end{align*}
Now we twist the multiplication of $Z_6$ by $\a$ as we explained in Proposition \ref{group-to-Hom} to obtain a Hom-group $Z^{\a}_6$ given by the following Cayley table of multiplication,
\begin{equation*}
\begin{array}{|c|c|c|c|c|c|c|}
\hline
\ Z^{\a}_6 & 0 & 1 & 2& 3 &4&5\\ \hline
0 & 0 & 5 &4 & 3 &2&1\\ \hline
1 &5 & 4 & 3 & 2 &1&0\\ \hline
2 & 4 & 3& 2 & 1 &0&5 \\ \hline
3 & 3& 2 & 1 & 0 &5&4 \\ \hline
4&2& 1& 0& 5&4&3 \\  \hline
5& 1& 0&5& 4&3&2 \\  \hline
\end{array}%
\end{equation*}%

It can be verified that $Z^{\a}_2= \{ 0, 3\}$ and  $Z^{\a}_3=\{ 0, 2, 4\}$ are the only non-trivial Hom-subgroups of the Hom-group $Z^{\a}_6$.
They are of orders  $2$ and $3$ which both divides the order of $Z^{\a}_3$. One notes that the Hom-subgroup $Z^{\a}_3$ is not cyclic in the usual sense. In fact $2+2$ in $Z^{\a}_3$ is $2$ and $4+4$ is $4$. Therefore $2$ and $4$ can not be the generators of $Z^{\a}_3$ in the usual sense.Therefore the proper notion of power of an element in  Hom-groups is not clear to us.
}
\end{example}

\section{Linearization of Hom-groups}

In this section first we recall  the linearization of Hom-groups   from \cite{hassan1}, \cite{hassan2}  to obtain some examples of an interesting class of nonassociative Hopf algebras called Hom-Hopf algebras. This linearization is called Hom-group Hopf algebras.
Then we apply Lagrange's theorem for finite Hom-groups to find out about dimensions of  Hom-sub-Hopf algebras of Hom-group Hopf algebras.
First we recall the definitions of Hom-algebras, Hom-coalgebras, Hom-bialgebras and Hom-Hopf algebras.  By \cite{ms1}, a Hom-associative algebra  $A$ over a field $\mathbb{K}$ is a $\mathbb{K}$-vector space with a bilinear map $\mu: A\ot A\longrightarrow A$, called multiplication, and  a linear homomorphism
  $\alpha: A\longrightarrow A$ satisfying the Hom-associativity  condition
  $$\mu ( \a(a) , \mu(b,c))= \mu (\mu(a,b) , \a(c)) , $$ for all elements $a,b,c\in A$. A Hom-associative algebra $A$ is called unital with unit $1$ if $\a(1)=1$, and $a1=1a=\a(a)$.
By \cite{ms2}, \cite{ms3}, a Hom-coalgebra is a triple $(C, \Delta, \varepsilon, \b)$, where $C$ is a $\mathbb{K}$-vector space, $\Delta: C\longrightarrow C\ot C$ a linear map, called comultiplication, with a Sweedler notation $ \Delta (c) = c^{(1)} \ot c^{(2)}$,  counit $\varepsilon: \mathbb{K}\longrightarrow C$, and $\b: C\longrightarrow C $ a linear map satisfying the Hom-coassociativity condition,
$$ \b( c^{(1)}) \ot c^{(2)(1)} \ot c^{(2)(2)}  = c^{(1)(1)} \ot c^{(1)(2)}\ot \b(c^{(2)}), $$ and

$$c^{(1)} \varepsilon (c^{(2)})= \varepsilon (c^{(1)})c^{(2)} = \b(c),~~~~~~   \varepsilon (\b(c))= \varepsilon (c)    .$$

A $(\a, \b)$-Hom-bialgebra is a tuple $(B, m, 1, \a,  \Delta, \varepsilon, \b)$ where $(B, m,1, \a)$ is a unital Hom-algebra and $(B, \varepsilon, \Delta, \b)$ is a counital Hom-coalgebra where $\Delta$ and $\varepsilon$ are morphisms of Hom-algebras, that is\\

i)  $\Delta (hk)= \Delta(h) \Delta(k)$.

ii)  $\Delta(1)= 1\otimes 1.$

iii)  $\varepsilon(xy)= \varepsilon(x) \varepsilon (y)$.

iv)   $\varepsilon(1)=1$.

v) $\varepsilon (\a(x))= \varepsilon(x)$.\\

Here we recall the  definition of Hom-Hopf algebras from \cite{ms2} and \cite{ms3}.
A Hom-bialgebra  $(B, m, \eta, \a,  \Delta, \varepsilon, \b)$ is called  a $(\a, \b)$-Hom-Hopf algebra if it is endowed with a morphism  $S: B\longrightarrow B$,  called antipode, satisfying


a) $S \circ \eta = \eta$ and $\varepsilon \circ S= \varepsilon$.

b) $S$ is an inverse of the identity map $\Id : B \longrightarrow B$  for the convolution product, i.e, for any $x\in B$,
\begin{equation}
 S(x^{(1)}) x^{(2)} = x^{(1)} S(x^{(2)})= \varepsilon(x)1_B.
\end{equation}

This definition of a Hom-Hopf algebra and specially antipodes  is different from the one in \cite{lmt}. For more details see \cite{hassan2}. However if $\a$ is invertible, both definitions will be equivalent.

\begin{example}
  {\rm
   For any Hom-group $(G, \a) $, the Hom-group algebra $\mathbb{K}G$ is a $(\a, \Id)$-Hom-Hopf algebra. It is a free algebra on $G$ where
 the  coproduct is given by    $\Delta (g)= g\ot g$, counit by $\varepsilon(g)=1$, the  antipode by $S(g)=g^{-1}$,  and $\b=\Id$ with   $\a$ which is linearly extended from $G$ to $\mathbb{K}G$.
  One notes that  elements $g\in \mathbb{K}G$ are group-like elements. Also $\mathbb{K}G$ is a cocommutative Hom-Hopf algebra. If $G$ is an abelian Hom-group then $\mathbb{K}G$ is a commutative Hom-Hopf algebra.
  }
\end{example}

\begin{lemma}
 Let $(G, \a)$ be a Hom-group. The vector space $A$ is a  Hom-sub Hopf algebra of $\mathbb{K}G$ if and only if  there exists  $H\preceq G$ where  $A= \mathbb{K} H$.
\end{lemma}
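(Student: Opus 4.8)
The plan is to prove both implications, handling the forward (harder) direction by identifying $H$ with the set of group-like elements contained in $A$. Throughout I use that the coalgebra structure of $\mathbb{K}G$ is the group-like one, $\Delta(g)=g\ot g$, $\varepsilon(g)=1$, with $\b=\Id$; in particular the Hom-coassociativity degenerates to ordinary coassociativity, so standard coalgebra theory applies verbatim to the comultiplication.

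For the direction $(\Leftarrow)$, suppose $H\preceq G$. I would verify directly that $\mathbb{K}H\subseteq\mathbb{K}G$ is stable under all the structure maps. Since $H$ is closed under the product and contains the Hom-unit $1$, the space $\mathbb{K}H$ is a unital Hom-subalgebra; since $\a(H)=H$, it is $\a$-stable; since $\Delta(h)=h\ot h$ and $\varepsilon(h)=1$ for $h\in H$, it is a subcoalgebra on which $\b=\Id$ restricts; and since $H$ is closed under inverses, $S(h)=h^{-1}\in\mathbb{K}H$. Hence $\mathbb{K}H$ is a Hom-sub-Hopf algebra. All of this is routine.

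For the direction $(\Rightarrow)$, suppose $A$ is a Hom-sub-Hopf algebra of $\mathbb{K}G$ and set $H:=A\cap G$. The essential step is to show $A=\mathbb{K}H$, that is, that $A$ is spanned by the group-like elements it contains. Take $a=\sum_i c_i g_i\in A$ with the $g_i\in G$ distinct and all $c_i\neq 0$. Since $A$ is a subcoalgebra, $\Delta(a)=\sum_i c_i\, g_i\ot g_i\in A\ot A$. For any linear functional $f$ on $\mathbb{K}G$, the slot map $\Id\ot f$ sends $A\ot A$ into $A$, so $\sum_i c_i f(g_i)\,g_i\in A$; choosing $f$ dual to $g_k$ on the finite support of $a$ yields $c_k g_k\in A$, and hence $g_k\in A\cap G=H$. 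Thus $a\in\mathbb{K}H$, and as the reverse inclusion is clear, $A=\mathbb{K}H$.

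It then remains to confirm $H\preceq G$. Because $A$ contains the unit and is closed under the product, $1\in H$ and $H$ is closed under multiplication, while Hom-associativity and Hom-unitality are inherited from $G$; closure of $A$ under the antipode $S$ with $S(h)=h^{-1}$ gives closure of $H$ under inverses. The one point needing care is that $\a$ restricts to a bijection of $H$: as $A$ is a Hom-sub-Hopf algebra with invertible twisting map (per the standing assumption, or automatically in the finite-dimensional setting of Section~$4$ since an injective endomorphism of a finite-dimensional space is bijective), $A$ is stable under both $\a$ and $\a^{-1}$, so $\a(H)\subseteq H$ and $\a^{-1}(H)\subseteq H$, whence $\a(H)=H$. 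With every axiom verified, $(H,\a)$ is a Hom-group, i.e.\ $H\preceq G$. I expect the span-by-group-likes step in $(\Rightarrow)$ to be the main obstacle, with the verification of $\a(H)=H$ the secondary subtlety.
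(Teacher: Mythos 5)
Your proof is correct and follows essentially the same route as the paper: in both, the forward direction identifies $H$ with the group-like elements contained in $A$, and the converse is a routine verification of closure under the structure maps. The only difference is that the paper outsources the two key facts --- that the group-likes of $A$ form a Hom-group (cited to [LMT]) and that $A=\mathbb{K}H$ (asserted as ``clearly'') --- whereas you prove them directly, via the standard slot-functional argument $(\Id\ot f)\circ\Delta$ showing a subcoalgebra of $\mathbb{K}G$ is spanned by group-likes, and via an explicit check of the Hom-group axioms including $\a(H)=H$.
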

\begin{proof}
Let $A$ be a  Hom-sub-Hopf algebra of $\mathbb{K}G$. Then the set of group-like elements of $A$ forms a Hom-group $H$, see \cite{lmt}, and clearly  $A= \mathbb{K} H$. Conversely if  $H\preceq G$ then by the structure of the product, coproduct and the antipode  explained in the previous example, $\mathbb{K}H$ is a Hom-sub Hopf algebra of $\mathbb{K}G$.
\end{proof}

\begin{theorem}
   Let $(G, \a)$ be a Hom-group and $\mathbb{K}G$ be  the Hom-group Hopf algebra. If $A$ is a Hom-sub-Hopf algebra of $\mathbb{K}G$, then $dim(A)$ divides $\abs{G}$.

\end{theorem}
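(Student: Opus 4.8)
The plan is to recognize this statement as an essentially immediate consequence of the two results already established just above it, namely the preceding Lemma (which identifies Hom-sub-Hopf algebras of $\mathbb{K}G$ with group algebras of Hom-subgroups) and the Hom-group version of Lagrange's theorem. So the whole argument is a short reduction: translate the Hopf-algebraic hypothesis into a group-theoretic one, translate dimension into order, and then quote Lagrange.

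Concretely, first I would apply the preceding Lemma to the Hom-sub-Hopf algebra $A \subseteq \mathbb{K}G$. That Lemma gives a Hom-subgroup $H \preceq G$ with $A = \mathbb{K}H$, where $H$ is precisely the set of group-like elements lying in $A$. Next I would compute $\dim(A)$. Since $\mathbb{K}G$ is by construction the free $\mathbb{K}$-vector space on the set $G$, its basis is $G$ itself, and the subset $H \subseteq G$ is therefore a linearly independent family spanning $\mathbb{K}H = A$. Hence $\dim(A) = \abs{H}$. (Equivalently, one may invoke the standard fact that distinct group-like elements of a Hom-Hopf algebra are linearly independent, so that the group-likes of $A$ form a basis of $A$; but the freeness of $\mathbb{K}G$ on $G$ makes this automatic here.) Finally I would apply Lagrange's theorem for Hom-groups to the pair $H \preceq G$, which yields that $\abs{H}$ divides $\abs{G}$. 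Combining these, $\dim(A) = \abs{H}$ divides $\abs{G}$, which is the assertion.

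I should note that for the statement to make sense $G$ is taken finite, as in the finite-dimensional setting of the abstract, so that $\abs{G}$ and $\abs{H}$ are genuine natural numbers and Lagrange's theorem applies. There is essentially no obstacle internal to this theorem: the substantive work — proving that Hom-groups are quasigroups, that cosets partition a finite Hom-group, and hence that $\abs{H}$ divides $\abs{G}$ — has already been carried out in Section $3$, and the correspondence $A \leftrightarrow H$ is supplied by the preceding Lemma. The only point requiring a moment's care is the dimension count $\dim(\mathbb{K}H)=\abs{H}$, and as explained this is immediate from the freeness of $\mathbb{K}G$ on the underlying set $G$. Thus the theorem is best presented as a corollary of the Hom-group Lagrange theorem together with the classification of Hom-sub-Hopf algebras of $\mathbb{K}G$.
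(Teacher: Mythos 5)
Your proof is correct and follows exactly the paper's own argument: invoke the preceding Lemma to write $A=\mathbb{K}H$ for some $H\preceq G$, observe $\dim(A)=\abs{H}$, and conclude by Lagrange's theorem for Hom-groups. Your added remarks---that $G$ must be finite for the statement to make sense, and that $\dim(\mathbb{K}H)=\abs{H}$ follows from the freeness of $\mathbb{K}G$ on $G$---are careful justifications of details the paper leaves implicit, but the route is the same.
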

\begin{proof}
  By previous Lemma there exists $H\preceq G$ where $A= \mathbb{K} H$.  Since $dim(A)= \abs{H}$, then by Lagrange's theorem  $dim(A)$ divides $\abs{G}$.
\end{proof}

\begin{example}{\rm
  Let us consider the cyclic group  $Z_5$. We define a group automorphism $ \a : Z_5\longrightarrow Z_5$ given by
  $$\a(1) =2, ~ \a(2) = 4, ~ \a(3)= 1, ~ \a(4)=3, ~ \a(0)=0.$$
 One  twists the multiplication of $Z_5$ by $\a$ as explained in Proposition \ref{group-to-Hom} to obtain  a Hom-group $Z^{\a}_5$ given by the following table of multiplication
\begin{equation*}
\begin{array}{|c|c|c|c|c|c|c|}
\hline
\ Z^{\a}_5 & 0 & 1 & 2& 3 &4\\ \hline
0 & 0 & 2 & 4 & 1 &3\\ \hline
1 &2 & 4 & 1 & 3 &0\\ \hline
2 &4 & 1 & 3 & 0 &2 \\ \hline
3 & 1 & 3 & 0 & 2 &4\\ \hline
4& 3& 0& 2& 4&1 \\  \hline
\end{array}%
\end{equation*}%
Since  the order of $\mathbb{K}Z^{\a}_5$  is prime, by previous theorem  it does not have any non-trivial Hom-sub Hopf algebra.
}
\end{example}

\begin{example}{\rm
  Consider the Hom-group Hopf algebra $\mathbb{K}G$.  Since $Z(G) \preceq G$ then the center of the Hom-Hopf algebra $\mathbb{K}G$  is the same as $\mathbb{K}Z(G)$
 and its  dimension   divides $ \abs{ G}$.
  }
\end{example}

\begin{remark}{\rm
\textbf{ Conjectures }\\

A challenge in studying a Hom-group $(G, \a)$ is  defining a proper notion of power of an element. Since $G$ is not associative we can  define two different types of powers called left and right powers.
Following  the contexts of nonassociative objects such as quasigroups, an approach to define a right power of an element $x$ in a Hom-group $(G, \a)$ is as follows. We set
 $x^1=x$, $x^2= x x$. Now $x^3= (x^2)x$ and inductively we can define other right powers. In fact one can define the right multiplication  function  $R_a(x)=xa$. So $x^2= R_x(x)$,  $x^3=R_x(x^2) $ and generally $x^n= R_x(x^{n-1})$. Similarly if $L_a(x)=x$ then the left powers of $x$ can inductively be defined  by $x^n=L_x(x^{n-1})$. However, this method has some problems such as defining cyclic Hom-subgroups.  The notions of power and  order of an element of $G$   are  not clear for Hom-groups. Consequently,  some fundamental theorems of group theory such as Cauchy's theorem will be left as a conjecture for Hom-groups; if $(G, \a)$ is a finite Hom-group and $p$ is a prime number dividing the order of $G$, then $G $ contains an element, and therefore a Hom-subgroup of order $p$.

}
\end{remark}


\begin{thebibliography}{9}




\bibitem[AS]{as}  N. Aizawa and H. Sato,  \emph{q-deformation of the Virasoro algebra with central extension}, Phys. Lett. B, 256, (1991),    p. 185-190.

\bibitem[BBW]{bbw}D. Bryanta, M. Buchanana, and I. M. Wanlessb    \emph{     The spectrum for quasigroups with cyclic automorphisms and additional symmetries},
Discrete Mathematics, Volume 309, Issue 4,  (2009), Pages 821-833.

\bibitem[BH]{bh} R. H. Bruck,   \emph{A Survey of Binary Systems},  Springer-Verlag, (1971).

\bibitem[BS]{bs} B. Baumeister and A. Stein, \emph{The finite Bruck loops}, Journal of  Algebra 330 (2011), 206-220.
\bibitem[CG]{cg} S. Caenepeel,  and  I. Goyvaerts,   \emph{Monoidal Hom-Hopf algebras},  Comm. Algebra 39(6), (2011), p. 2216-2240.


\bibitem[CKL]{ckl} M. Chaichian, P.  Kulish, and J. Lukierski , \emph{q-deformed Jacobi identity, q-oscillators and q-deformed infinite-dimensional
algebras}, Phys. Lett. B, 237 (1990), p. 401-406.

\bibitem[CKRV]{ckrv} O. Chein, M. K. Kinyon, A. Rajah, and P. Vojt¡echovsk´y, \emph{Loops and the Lagrange property},
Results. Math. 43 (2003), 74-78.


\bibitem[CPS]{cps} O. Chein, H. Pflugfelder, and J. D. H. Smith, Quasigroups and loops: theory and
applications, Heldermann Verlag, Berlin, (1990).



\bibitem[CZ]{cz} T. L.  Curtright  and  C. K. Zachos , \emph{Deforming maps for quantum algebras}, Phys. Lett. B, 243 (1990), p. 237-244.



\bibitem[CWZ]{cwz}  Y. Chen, Z. Wang, and L. Zhang, \emph{Integrals for monoidal Hom-Hopf algebras and their applications},  Journal of Mathematical Physics 54, 073515, (2013).

\bibitem[DK]{dk} J. Denes and A. Keedwell, \emph{Latin squares and their applications}, Academic Press, Inc.,
Budapest, (1974).

\bibitem[FKP]{fkp} T. Foguel, M. K. Kinyon, and J. D. Phillips, \emph{On twisted subgroups and Bol loops of odd order},
Rocky Mountain J. Math. 36 (2006), no. 1, 183-212.



\bibitem[GMMP]{gmmp}  G. Graziani, A. Makhlouf, C. Menini, and F.  Panaite
   \emph{BiHom-associative algebras, BiHom-Lie algebras and BiHom-bialgebras}, SIGMA 11, (2015), 086, 34 pages.

\bibitem[GZ]{gz} A. N. Grishkov and A. V. Zavarnitsine, \emph{Lagrange's theorem for Moufang loops}, Math. Proc.
Cambridge Philos. Soc. 139 (2005), 41-57.

\bibitem[H1]{hassan1} M. Hassanzadeh, \emph{Hom-groups, Representations and homological algebra}, will appear in Colloquium Mathematicum,   https://arxiv.org/abs/1801.07398.

\bibitem[H2]{hassan2} M. Hassanzadeh, \emph{On antipodes of Hom-Hopf algebras}, https://arxiv.org/abs/1803.01441, (2018).

\bibitem[HLS]{hls} J. T. Hartwig, D. Larsson, and S. D. Silvestrov, \emph{Deformations of Lie
algebras using $\sigma$-derivations}, J. Algebra 295 (2006), no. 2, p. 314-361.

\bibitem[HSS]{hss}  M. Hassanzadeh, I. Shapiro and S. S\"utl\"u,  \emph{Cyclic homology for Hom- algebras},
 Journal of Geometry and Physics, Volume 98, December (2015) , P. 40-56.


\bibitem[LMT]{lmt} C. Laurent-Gengoux, A. Makhlouf,  and J. Teles,\emph{ Universal algebra of a Hom-Lie algebra and group-like elements}
 , Journal of Pure and Applied Algebra, Volume 222, Issue 5, (2018), P. 1139-1163.


\bibitem[MS1]{ms1} A. Makhlouf and S. D. Silvestrov,\emph{ Hom-algebra structures}, J. Gen. Lie
Theory Appl. 2 (2008), no. 2, p. 51-64.


\bibitem[MS2]{ms2} A. Makhlouf and S. Silvestrov,\emph{ Hom-Lie admissible Hom-coalgebras and
Hom-Hopf algebras}, Generalized Lie theory in mathematics, physics and
beyond, Springer, Berlin, (2009), p. 189-206.

\bibitem[MS3]{ms3}  A. Makhlouf and  S. Silvestrov ,\emph{ Hom-algebras and Hom-coalgebras}, J. Algebra Appl. 9 (2010),
no. 4, p. 553-589.






\bibitem[PSS]{pss} F. Panaite, P. Schrader,  and M. D. Staic, \emph{Hom-Tensor Categories and the Hom-Yang-Baxter Equation},
https://arxiv.org/abs/1702.08475, (2017).


\bibitem[SC]{sc} B. Schneier, \emph{Applied cryptography}, John Wiley and Sons, Inc., New York, NY, (1996).

\bibitem[S]{s} J.D.H. Smith,  \emph{An Introduction to Quasigroups and their Representations},  Chapman and Hall/CRC Press,  (2007).

\bibitem[SW]{sw} T. Suksumran and K. Wiboonton  \emph{Lagrange's theorem for gyrogroups and the Cauchy property}, Quasigroups and Related Systems 22 (2014), 283-294.

\bibitem[Ya2]{ya2} D. Yau,\emph{ Hom-bialgebras and comodule Hom-algebras}, Int. Electron. J.
Algebra 8 (2010), p. 45-64.

\bibitem[Ya3]{ya3} D. Yau, \emph{ Hom-quantum groups: I. Quasi-triangular Hom-bialgebras}, J.
Phys. A 45 (2012), no. 6.

\bibitem[Ya4]{ya4} D. Yau, \emph{ Enveloping algebras of Hom-Lie algebras}, Journal of Generalized Lie Theory and Applications, 2,
(2008), p. 95-108.

\bibitem[ZZ]{zz} X. Zhao, and X. Zhang,  \emph{Lazy 2-cocycles over monoidal Hom-Hopf algebras}, Colloq. Math. 142(1), (2016), p. 61-81.









\end{thebibliography}
\end{document}